\def\frk{\frak}               
\def\Phi{{\frk n}}
\def\Phi{{\frk N}}
\def\opn#1#2{\def#1{\operatorname{#2}}} 
\opn\chara{char} \opn\length{\ell} \opn\pd{pd} \opn\rk{rk}
\opn\projdim{proj\,dim} \opn\injdim{inj\,dim} \opn\rank{rank}
\opn\depth{depth} \opn\sdepth{sdepth} \opn\fdepth{fdepth}
\opn\grade{grade} \opn\height{height} \opn\embdim{emb\,dim}
\opn\codim{codim}  \opn\min{min} \opn\max{max}
\opn\Tr{Tr} \opn\bigrank{big\,rank}
\opn\superheight{superheight}\opn\lcm{lcm}
\opn\trdeg{tr\,deg}
\opn\reg{reg} \opn\lreg{lreg} \opn\ini{in} \opn\lpd{lpd}
\opn\size{size}
\opn\div{div} \opn\Div{Div} \opn\cl{cl} \opn\Cl{Cl}
\opn\Spec{Spec} \opn\Supp{Supp} \opn\supp{supp} \opn\Sing{Sing}
\opn\Ass{Ass} \opn\Min{Min}
\opn\Ann{Ann} \opn\Rad{Rad} \opn\Soc{Soc}
\opn\Im{Im} \opn\Ker{Ker} \opn\Coker{Coker} \opn\Am{Am}
\opn\Hom{Hom} \opn\Tor{Tor} \opn\Ext{Ext} \opn\End{End}
\opn\Aut{Aut} \opn\id{id}  \opn\deg{deg}
\opn\nat{nat}
\opn\pff{pf}
\opn\Pf{Pf} \opn\GL{GL} \opn\SL{SL} \opn\mod{mod} \opn\ord{ord}
\opn\Gin{Gin} \opn\Hilb{Hilb}
\opn\aff{aff} \opn\con{conv} \opn\relint{relint} \opn\st{st}
\opn\lk{lk} \opn\cn{cn} \opn\core{core} \opn\vol{vol}
\opn\link{link} \opn\star{star}
\opn\gr{gr}
\def\pot#1#2{#1[\kern-0.28ex[#2]\kern-0.28ex]}
\opn\dirlim{\underrightarrow{\lim}}
\opn\inivlim{\underleftarrow{\lim}}
\let\Dirsum=\bigoplus
\def\Implies{\ifmmode\Longrightarrow \else
        \unskip${}\Longrightarrow{}$\ignorespaces\fi}
\def\implies{\ifmmode\Rightarrow \else
        \unskip${}\Rightarrow{}$\ignorespaces\fi}
\def\iff{\ifmmode\Longleftrightarrow \else
        \unskip${}\Longleftrightarrow{}$\ignorespaces\fi}
\newtheorem{Theorem}{Theorem}[section]
\newtheorem{Lemma}[Theorem]{Lemma}
\newtheorem{Proposition}[Theorem]{Proposition}
\newtheorem{Remark}[Theorem]{Remark}
\newtheorem{Example}[Theorem]{Example}
\let\epsilon\varepsilon
\let\phi=\varphi
\let\kappa=\varkappa
\def\qed{\ifhmode\textqed\fi
      \ifmmode\ifinner\quad\qedsymbol\else\dispqed\fi\fi}
\def\textqed{\unskip\nobreak\penalty50
       \hskip2em\hbox{}\nobreak\hfil\qedsymbol
       \parfillskip=0pt \finalhyphendemerits=0}
\def\dispqed{\rlap{\qquad\qedsymbol}}
\opn\dis{dis}
\def\pnt{{\raise0.5mm\hbox{\large\bf.}}}
\opn\Lex{Lex}
\begin{document}

\title{\bf Special Stanley Decompositions}

\author{ Adrian Popescu }

\thanks{}

\address{Adrian Popescu, University of Bucharest, Department of Mathematics, Str. Academiei 14, Bucharest, Romania}
\email{adi\_popescum@yahoo.de}

\maketitle
\begin{abstract}
Let $I$ be  an intersection of three monomial prime ideals of a
polynomial algebra $S$ over a field. We give a special Stanley
decomposition of $I$ which provides a lower bound of the Stanley
depth of $I$, greater than or equal to $\depth\ (I)$, that is
Stanley's Conjecture holds for $I$.

  \vskip 0.4 true cm
 \noindent
  {\it Key words } : Monomial Ideals,  Stanley decompositions, Stanley depth.\\
 {\it 2000 Mathematics Subject Classification: Primary 13C15, Secondary 13F20, 13F55,
13P10.}
\end{abstract}

\section*{Introduction}

 Let $K$ be a field and $S=K[x_1,\ldots,x_n]$ be the polynomial ring over $K$
 in $n$ variables. Let $I\subset S$ be a squarefree monomial ideal of $S$, $u\in I$ a monomial and $uK[Z]$, $Z\subset \{x_1,\ldots ,x_n\}$ the linear $K$-subspace of $I$ of all elements $uf$, $f\in K[Z]$. Suppose that $uK[Z]$ is free over $K[Z]$. A Stanley decomposition of $I$ is a presentation of $I$ as a finite direct sum of such spaces ${\mathcal D}:\
I=\Dirsum_{i=1}^ru_iK[Z_i]$. Set $\sdepth
(\mathcal{D})=\min\{|Z_i|:i=1,\ldots,r\}$ and
\[
\sdepth\ (I) :=\max\{\sdepth \ ({\mathcal D}):\; {\mathcal D}\; \text{is a
Stanley decomposition of}\;  I \}.
\]

By Stanley's Conjecture \cite{S} the Stanley depth $\sdepth\ (I)$
of $I$ is $\geq \depth\ (I)$. This is proved if either $n\leq 5$
by \cite{P}, or $I$ is the intersection of two monomial
irreducible ideals by \cite[Theorem 5.6]{PQ}. It is the purpose of
our paper to show that Stanley's Conjecture holds for
intersections of three monomial prime ideals  (see Theorem
\ref{sd>d}) and for arbitrary intersections of prime ideals
generated by disjoint sets of variables (see Theorem \ref{th1}).
For the proof we give a special Stanley decomposition ${\mathcal
D}$ of $I$ and compute $\sdepth \ ({\mathcal D})$ (see Lemma
\ref{sd} and Proposition \ref{sdepth}) which is $\geq \depth\
(I)$.

\vskip 1 cm

\section{Intersections of primes generated by disjoint sets of variables}
Let $S=K[x_1,\ldots,x_n]$, $I \subset K[x_1,\ldots,x_r]=S'$ and $J
\subset K[x_{r+1},\ldots,x_n]=S''$ be monomial ideals, where
$1<r<n$. The following two lemmas are elementary, their proofs
being suggested by \cite[Theorem 2.2.21]{Vi} and \cite[Lemma
4.1]{PQ}.
\begin{Lemma}\label{dep1}Then
$$\depth_{S}(IS \cap JS)=\depth_{S'} (I) + \depth_{S''}(J).$$
\end{Lemma}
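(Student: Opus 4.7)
The plan is to reduce the computation to the well-known depth formula for tensor products of modules over a field, which is precisely the content of \cite[Theorem 2.2.21]{Vi}. Since $S = S' \otimes_K S''$, every $S'$-module $M$ and $S''$-module $N$ give rise to an $S$-module $M \otimes_K N$, and one has
\[
\depth_S(M \otimes_K N) = \depth_{S'}(M) + \depth_{S''}(N).
\]

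First I would identify $IS \cap JS$ as an explicit tensor product. Since the generators of $I$ involve only the variables $x_1,\dots,x_r$ and those of $J$ only $x_{r+1},\dots,x_n$, a monomial $w$ lies in $IS \cap JS$ if and only if it is divisible by some $u \in G(I)$ and by some $v \in G(J)$; because $u$ and $v$ use disjoint variable sets, this is equivalent to $w$ being divisible by $uv$. Hence
\[
IS \cap JS = IJ.
\]
The natural multiplication map $I \otimes_K J \to IJ$, $u \otimes v \mapsto uv$, is an isomorphism of $S$-modules: it is surjective by the monomial description of $IJ$, and injective because a $K$-basis of $I \otimes_K J$ consisting of pairs of monomial generators maps to linearly independent monomials in $S$ (again using disjointness of the variables).

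Next, I would invoke the tensor product depth formula with $M = I$ regarded as an $S'$-module and $N = J$ regarded as an $S''$-module. This yields
\[
\depth_S(IS \cap JS) = \depth_S(I \otimes_K J) = \depth_{S'}(I) + \depth_{S''}(J),
\]
which is the desired equality.

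The only real content is the first step, namely the identification $IS \cap JS \cong I \otimes_K J$ as $S$-modules; once this is in place, the conclusion is immediate from the cited tensor product theorem. I do not expect any serious obstacle: the argument is purely bookkeeping on monomials, exactly in the spirit of \cite[Lemma 4.1]{PQ}.
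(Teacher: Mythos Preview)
Your argument is correct. The identification $IS\cap JS=IJ\cong I\otimes_K J$ as $S$-modules is valid for monomial ideals in disjoint sets of variables, and once it is in place the cited tensor product formula gives the result immediately.

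The paper proceeds differently. Instead of realizing $IS\cap JS$ itself as a tensor product, it passes to quotients via the Mayer--Vietoris sequence
\[
0 \longrightarrow S/(IS\cap JS) \longrightarrow S/IS \oplus S/JS \longrightarrow S/(IS+JS) \longrightarrow 0,
\]
applies the same tensor product formula to the rightmost term $S/(IS+JS)\cong S'/I\otimes_K S''/J$, and then invokes the Depth Lemma to recover $\depth_S(S/(IS\cap JS))=\depth_{S'}(S'/I)+\depth_{S''}(S''/J)+1$. Your route is more direct and avoids both the exact sequence and the Depth Lemma; the paper's route stays at the level of quotient rings, which is the form in which \cite[Theorem~2.2.21]{Vi} is usually stated, at the cost of an extra step (and of the implicit check that the middle term of the sequence has strictly larger depth than the right one, which holds because $I$ and $J$ are nonzero).
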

\begin{proof}
In the exact sequence of $S$-modules:
$$0 \rightarrow \frac{S}{IS \cap JS} \rightarrow \frac{S}{IS} \oplus \frac{S}{JS} \rightarrow \frac{S}{IS+JS} \rightarrow 0$$
 we have
 $\depth_S\left(\frac{S}{IS+JS}\right)=\depth_{S'}(S'/I)+\depth_{S''}(S''/J)$ by \cite[Theorem
 2.2.21]{Vi}. Using Depth Lemma we get $$\depth_S\left(\frac{S}{IS \cap
JS}\right)=\depth_S\left(\frac{S}{IS+JS}\right)+1=\depth_{S'}(S'/I)+\depth_{S''}(S''/J)+1,$$
which is enough.
\end{proof}
The following result is analogous to  \cite[Theorem 3.1]{R}.
\begin{Lemma}\label{sdep1}With the hypotheses from the previous lemma, we have
$$\sdepth_S(IS \cap JS) \ge \sdepth_{S'}(I)+\sdepth_{S''}(J).$$
\end{Lemma}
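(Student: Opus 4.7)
The plan is to build a Stanley decomposition of $IS\cap JS$ directly out of Stanley decompositions of $I$ (over $S'$) and of $J$ (over $S''$). Fix decompositions
\[
I=\Dirsum_{i=1}^r u_iK[Z_i],\qquad J=\Dirsum_{j=1}^s v_jK[W_j],
\]
with $Z_i\subset\{x_1,\dots,x_r\}$ and $W_j\subset\{x_{r+1},\dots,x_n\}$, chosen so that $\sdepth_{S'}(I)=\min_i|Z_i|$ and $\sdepth_{S''}(J)=\min_j|W_j|$. I would then propose
\[
IS\cap JS \;=\; \Dirsum_{i,j} u_iv_j\,K[Z_i\cup W_j]
\]
as a Stanley decomposition in $S$. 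Since $Z_i$ and $W_j$ lie in disjoint variable sets, $|Z_i\cup W_j|=|Z_i|+|W_j|$, so this decomposition has Stanley depth $\min_i|Z_i|+\min_j|W_j|=\sdepth_{S'}(I)+\sdepth_{S''}(J)$, and the desired inequality follows.

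First I would verify the underlying set-theoretic identity. Any monomial $w\in S$ factors uniquely as $w=w'w''$ with $w'\in S'$ and $w''\in S''$; because $I$ and $J$ are monomial ideals in disjoint variable sets, $w\in IS$ iff $w'\in I$, and $w\in JS$ iff $w''\in J$. Hence the monomials of $IS\cap JS$ are exactly the products $w'w''$ with $w'$ a monomial of $I$ and $w''$ a monomial of $J$.

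Next I would check that the claimed decomposition is indeed direct and is a Stanley decomposition. Given a monomial $w=w'w''\in IS\cap JS$, the uniqueness of the Stanley presentations of $I$ and $J$ produces unique indices $i,j$ and unique monomials $f\in K[Z_i]$, $g\in K[W_j]$ with $w'=u_if$ and $w''=v_jg$; the product $fg$ lies in $K[Z_i\cup W_j]$ and this expression $w=u_iv_j(fg)$ is forced. Freeness of $u_iv_jK[Z_i\cup W_j]$ over $K[Z_i\cup W_j]$ follows from the freeness of the two factor decompositions together with the disjointness of $Z_i$ and $W_j$, which guarantees no cancellation occurs.

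The only mildly delicate point is the bookkeeping in the uniqueness step above: one must be careful that the factorisation $w=w'w''$ really is compatible with the two Stanley decompositions, and this is exactly where the disjointness of the variable sets is essential. Once this is in place the inequality is immediate by taking the maximum over the two input decompositions.
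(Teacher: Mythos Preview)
Your argument is correct and is precisely the standard tensor-product construction the paper has in mind when it says that the proof of \cite[Lemma 4.1]{PQ} carries over. The paper gives no further details beyond that citation, so your proposal is in fact a clean written-out version of the intended proof.
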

The proof of \cite[Lemma 4.1]{PQ} works also in our case.

\begin{Remark} {\em The inequality of the above lemma can be strict as happens in the case when $n=4$, $r=2$, $I=(x_1,x_2)$, $J=(x_3,x_4)$. Indeed, then
$\sdepth_{S'}(I)=1$, $\sdepth_{S''}(J)=1$, and $\sdepth_S(IS\cap
JS)=3>2=\sdepth_{S'}(I)+\sdepth_{S''}(J)$, as shows the  Stanley
decomposition $$IS\cap JS=x_1x_3K[x_1,x_3,x_4]\oplus
x_1x_4K[x_1,x_2,x_4]\oplus $$
$$x_2x_3 K[x_1,x_2,x_3]\oplus x_2x_4K[x_2,x_3,x_4]\oplus
x_1x_2x_3x_4S.$$}
\end{Remark}
\begin{Theorem}\label{th1} Let $0=r_0<r_1<r_2<\ldots<r_s=n$, $S=K[x_1,\ldots,x_n]$ and set
$P_1=(x_1,\ldots,x_{r_1})$,
$P_2=(x_{r_1+1},\ldots,x_{r_2}),\ldots,P_s=(x_{r_{s-1}+1},\ldots,x_{r_s})$
and $I=\displaystyle\bigcap_{i=1}^s P_i$. Then,
$$\sdepth\ (I) \ge  \depth\ (I) =s,$$
and in particular Stanley's Conjecture holds in this case.
\end{Theorem}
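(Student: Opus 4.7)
The plan is to argue by induction on $s$, using Lemmas \ref{dep1} and \ref{sdep1} to decouple the last prime $P_s$ from the rest. Since $P_s$ lives in a polynomial ring on variables disjoint from those of $P_1,\ldots,P_{s-1}$, the intersection $I$ splits exactly in the form required by those lemmas, and this is what makes the induction go through without having to produce an explicit Stanley decomposition by hand.

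The base case is $s=1$: here $I=P_1=(x_1,\ldots,x_n)$ is the graded maximal ideal $\mm$ of $S$. The equality $\depth(\mm)=1$ follows from Auslander--Buchsbaum applied to the Koszul resolution of $\mm$, and the explicit decomposition
$$\mm \;=\; \bigoplus_{i=1}^{n} x_i\,K[x_1,\ldots,x_i]$$
shows $\sdepth(\mm)\ge 1$. For the inductive step, assume the theorem for $s-1$ and set $T=K[x_1,\ldots,x_{r_{s-1}}]$, $S''=K[x_{r_{s-1}+1},\ldots,x_n]$, $I'=\bigcap_{i=1}^{s-1}P_i\subset T$, and $J=(x_{r_{s-1}+1},\ldots,x_n)\subset S''$. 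By construction $I=I'S\cap JS$. Applying Lemma \ref{dep1} together with the inductive hypothesis (which gives $\depth_T(I')=s-1$) and the base case for $J$ (which gives $\depth_{S''}(J)=1$), one obtains
$$\depth_S(I)=\depth_T(I')+\depth_{S''}(J)=(s-1)+1=s.$$
Applying Lemma \ref{sdep1} together with the inductive hypothesis $\sdepth_T(I')\ge s-1$ and the base case estimate $\sdepth_{S''}(J)\ge 1$, one obtains
$$\sdepth_S(I)\ge \sdepth_T(I')+\sdepth_{S''}(J)\ge (s-1)+1=s,$$
which completes the induction.

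I do not expect a genuine obstacle here: once Lemmas \ref{dep1} and \ref{sdep1} are in place, the theorem follows by a routine induction, since the hypothesis that the primes are generated by \emph{disjoint} sets of variables is precisely what lets us write $I$ as the intersection of an ideal pulled back from $T$ and one pulled back from $S''$. The only step that requires any care is verifying the base case for $\sdepth$, which is handled by the zig-zag decomposition displayed above; the real work of the paper lies in Theorem \ref{sd>d}, where the prime ideals are no longer supported on disjoint variable sets.
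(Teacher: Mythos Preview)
Your proof is correct and follows essentially the same approach as the paper: both argue by recurrence on $s$, peeling off one prime at a time via Lemmas~\ref{dep1} and~\ref{sdep1}. The only difference is that for the base case the paper invokes Shen's formula $\sdepth_{S^i}(P_i\cap S^i)=\lceil (r_i-r_{i-1})/2\rceil$, yielding the sharper intermediate bound $\sdepth(I)\ge\sum_i\lceil(r_i-r_{i-1})/2\rceil$, whereas your zig-zag decomposition gives only $\sdepth_{S''}(J)\ge 1$---which is all that is needed for the stated inequality $\sdepth(I)\ge s$.
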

\begin{proof}Let us denote $S^i=K[x_{r_{i-1}+1},\ldots,x_{r_i}]$. Since
$\depth_{S^i}\left(\displaystyle\frac{S^i}{P_i \cap S^i}\right)=0$
we get $\depth_{S^i}(P_i \cap S^i)=1$. By Lemma \ref{dep1} and
recurrence we obtain $\depth\ (I) = s$. Applying Lemma
\ref{sdep1}, by recurrence and \cite{Sh} we get that
$$\sdepth_S\ (I) \ge \displaystyle\sum_{i=1}^s \sdepth_{S^i}(P_i \cap S^i) = \displaystyle\sum_{i=1}^s
\left\lceil\displaystyle\frac{r_i-r_{i-1}}{2}\right\rceil \ge s,$$ where $\lceil a \rceil$ is the lowest integer number greater or equal to $a \in \mathbb{R}.$
\end{proof}
\begin{Remark}
If $s=2$, then Ishaq \cite[Corollary 2.9, 2.10]{Is} proved that
$\sdepth(I) = \left\lceil\displaystyle\frac{n+1}{2}\right\rceil$
if either $n$ is odd, or $n$ is even but $r_1$ is odd, and
$\displaystyle\frac{n}{2} \le \sdepth(I) \le
\displaystyle\frac{n}{2}+1$ if $n$ and $r_1$ are even.
\end{Remark}

\vskip 1 cm
In the next section  we need the following two
lemmas:

\begin{Lemma}\cite[Lemma 4.3]{PQ}\label{pq} Let $S=K[x_1,\ldots,x_n]$,
$Q=(x_1,\ldots,x_t)$ and $Q'=(x_{r+1},\ldots,x_n)$ where $1\le r
\le t < n$. Then
$$\sdepth(Q \cap Q') \ge \left\lceil\displaystyle\frac{r}{2}\right\rceil+ \left\lceil\displaystyle\frac{n-t}{2}\right\rceil.$$
\end{Lemma}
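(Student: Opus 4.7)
The plan is to induct on the size of the overlap $|B|$, where $B := \{x_{r+1}, \ldots, x_t\}$ is the set of variables appearing in both $Q$ and $Q'$. When $|B| = 0$ (i.e.\ $t = r$), the two ideals live in disjoint sets of variables, and the result drops straight out of Lemma \ref{sdep1}. When $|B| > 0$, any $x \in B$ lies in $Q \cap Q'$, and this lets us split off a ``free'' top-dimensional summand $xS$ and shrink the ring by one variable.

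For the base case $t = r$, write $Q = IS$ and $Q' = JS$ with $I := (x_1, \ldots, x_r) \subset S' := K[x_1, \ldots, x_r]$ and $J := (x_{r+1}, \ldots, x_n) \subset S'' := K[x_{r+1}, \ldots, x_n]$. Lemma \ref{sdep1}, together with the well-known formula $\sdepth_{K[y_1,\ldots,y_k]}(y_1,\ldots,y_k) = \lceil k/2\rceil$, gives
$$\sdepth_S(Q \cap Q') \;\ge\; \sdepth_{S'}(I) + \sdepth_{S''}(J) \;=\; \left\lceil\frac{r}{2}\right\rceil + \left\lceil\frac{n-r}{2}\right\rceil,$$
which is the desired bound since $t = r$. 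The degenerate case $r = 1$, where the hypothesis ``$1 < r$'' of Lemma \ref{sdep1} fails on the nose, is handled by the same proof.

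For the inductive step $t > r$, fix $x := x_{r+1} \in B$ and decompose $Q \cap Q' = xS \oplus M$ as a $K$-vector space, where $M$ is the span of the monomials of $Q \cap Q'$ not divisible by $x$. Setting $\tilde S := K[x_1, \ldots, \widehat{x_{r+1}}, \ldots, x_n]$, one checks that $M = \tilde Q \cap \tilde Q'$ in $\tilde S$, with $\tilde Q = (x_1, \ldots, x_r, x_{r+2}, \ldots, x_t)$ and $\tilde Q' = (x_{r+2}, \ldots, x_n)$: an intersection of exactly the same shape, with new parameters $(r, t-1, n-1)$ and strictly smaller overlap $|B|-1$. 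The inductive hypothesis provides a Stanley decomposition of $M$ in $\tilde S$ of Stanley depth at least $\lceil r/2\rceil + \lceil(n-t)/2\rceil$, and juxtaposing it with $xS = x \cdot K[x_1,\ldots,x_n]$ (Stanley depth $n$) yields a Stanley decomposition of $Q \cap Q'$ in $S$ with
$$\sdepth_S(Q \cap Q') \;\ge\; \min\!\left\{n,\ \left\lceil\frac{r}{2}\right\rceil + \left\lceil\frac{n-t}{2}\right\rceil\right\} \;=\; \left\lceil\frac{r}{2}\right\rceil + \left\lceil\frac{n-t}{2}\right\rceil.$$

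The main subtle point, and in fact the only one beyond routine bookkeeping, is to confirm that the Stanley decomposition of $M$ produced in the smaller ring $\tilde S$ transplants to $S$ with the same integer-valued Stanley depth: each $K[Z_i]$ with $Z_i$ a set of variables of $\tilde S$ is literally a subring of $S$, the cardinalities $|Z_i|$ are unchanged, and the directness of the sum as $K$-vector spaces is unaffected by the change of ambient ring.
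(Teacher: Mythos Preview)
The paper does not prove this lemma at all: it simply quotes it as \cite[Lemma 4.3]{PQ} and moves on. So there is no ``paper's own proof'' to compare against; you have supplied an argument where the present paper supplies only a citation.

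Your argument is correct. The base case $t=r$ is exactly the disjoint-variable situation of Lemma~\ref{sdep1} combined with the formula $\sdepth_{K[y_1,\dots,y_k]}(y_1,\dots,y_k)=\lceil k/2\rceil$ from \cite{Sh} (or \cite{C}). In the inductive step, the decomposition $Q\cap Q' = x_{r+1}S \oplus M$ is valid because $x_{r+1}\in Q\cap Q'$, and your identification of $M$ with $\tilde Q\cap\tilde Q'$ in $\tilde S$ is accurate: a monomial of $Q\cap Q'$ not divisible by $x_{r+1}$ must be divisible by some generator of $Q$ other than $x_{r+1}$ and by some generator of $Q'$ other than $x_{r+1}$. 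The new parameters $(r,t-1,n-1)$ satisfy $1\le r\le t-1<n-1$ (using $r<t<n$), so the induction is well-founded, and $(n-1)-(t-1)=n-t$ preserves the second ceiling. Your final paragraph correctly addresses why a Stanley decomposition of $M$ over $\tilde S$ is also one over $S$ with the same $|Z_i|$'s; this is the same observation underlying Lemma~\ref{hvz}, only here no extra variable is adjoined to the $Z_i$. The remark about $r=1$ is a harmless technicality: the proof of Lemma~\ref{sdep1} (tensoring Stanley decompositions) does not actually use $r>1$.
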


\begin{Lemma}\cite[Lemma 3.6]{HVZ}\label{hvz} Let $I \subset S=K[x_1,\ldots,x_n]$ and
$S'=S[x_{n+1},\ldots,x_t]$, where $t>n$. Then,
$$\sdepth_{S'}(IS')=\sdepth_S(I) + (t-n).$$
\end{Lemma}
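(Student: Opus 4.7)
The plan is to establish both inequalities, reducing by induction on $t-n$ to the single-variable case $t=n+1$; writing $y=x_{n+1}$ and $S'=S[y]$, the task becomes showing $\sdepth_{S'}(IS')=\sdepth_S(I)+1$.

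For the lower bound $\sdepth_{S'}(IS')\geq\sdepth_S(I)+1$, I would take a Stanley decomposition $\mathcal{D}\colon I=\Dirsum_i u_iK[Z_i]$ realizing $\sdepth_S(I)$ and simply adjoin $y$ to each piece, obtaining
\[
IS'=\Dirsum_i u_iK[Z_i\cup\{y\}].
\]
The verification that this is a valid Stanley decomposition is routine: every monomial of $IS'$ has the form $uy^a$ with $u\in I$ lying in a unique original piece, so the $y$-exponent is determined and the decomposition is direct. Its Stanley depth is $\sdepth(\mathcal{D})+1$.

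For the upper bound $\sdepth_{S'}(IS')\leq\sdepth_S(I)+1$, I would fix a Stanley decomposition $\mathcal{D}'\colon IS'=\Dirsum_i u_iK[Z_i]$ attaining $d':=\sdepth_{S'}(IS')$, write each generator as $u_i=u_i'y^{a_i}$ with $u_i'\in S$, and intersect each piece with $S$. A brief case analysis shows that $(u_iK[Z_i])\cap S$ is empty when $a_i>0$, equals $u_i'K[Z_i]$ when $a_i=0$ and $y\notin Z_i$, and equals $u_i'K[Z_i\setminus\{y\}]$ when $a_i=0$ and $y\in Z_i$. Since $I=IS'\cap S$ and the pieces of $\mathcal{D}'$ are disjoint, collecting the non-empty restricted pieces produces a Stanley decomposition of $I$ whose Stanley depth is at least $d'-1$; hence $\sdepth_S(I)\geq\sdepth_{S'}(IS')-1$, which is the desired inequality.

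The only delicate step is the upper bound: one must check that restriction to $S$ genuinely converts a direct sum decomposition of $IS'$ into a direct sum decomposition of $I$, not merely a covering. This is exactly why the case analysis above must exhaustively describe each intersection and confirm that the non-empty restricted pieces remain free over the appropriate polynomial subring. Once this is verified, both bounds are tight and the induction on $t-n$ concludes the proof.
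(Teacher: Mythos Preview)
Your argument is correct and is essentially the standard proof of this fact. Note, however, that the present paper does not supply its own proof of this lemma: it is quoted verbatim from \cite[Lemma~3.6]{HVZ} and used as a black box, so there is no in-paper argument to compare against. Your induction on $t-n$, the extension of a Stanley decomposition by adjoining the new variable for the lower bound, and the restriction-to-$S$ argument for the upper bound are exactly the ideas behind the original proof in \cite{HVZ}.
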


\vskip 1 cm
\section{Intersections of three prime ideals}
Let $S$=K[$x_{1}$,\ldots,$x_{n}$] and $P_{1}$,$P_{2}$,$P_{3}$ be
three non-zero monomial prime ideals not included one in the other
such that $\displaystyle\sum\limits_{i=1}^3
P_{i}=(x_{1},\ldots,x_{n})$. Let I=$P_{1} \cap P_{2} \cap P_{3}.$
\begin{Proposition}\label{depth} Then
\[
\depth\ (I) = \left\{
\begin{array}{l l}
  3,      \textnormal{ if $P_{i} \not\subset P_{j}+P_{k}$ \textnormal{for any different}  $i,j,k \in \{1,2,3\}$}\\
  n+2-\max\{\textnormal{ht}(P_{i}+P_{j}),\textnormal{ht}(P_{i}+P_{k})\}, \textnormal{if $P_{i} \subset P_{j}+P_{k}$ }.\\
\end{array} \right.
\]

\end{Proposition}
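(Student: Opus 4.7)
The plan is to prove the formula by applying Mayer--Vietoris type short exact sequences together with the Depth Lemma. As a preliminary, for any two monomial prime ideals $P,Q$ with $P\not\subset Q$ and $Q\not\subset P$, the sequence
$$0\to S/(P\cap Q)\to S/P\oplus S/Q\to S/(P+Q)\to 0$$
combined with $\textnormal{ht}(P+Q)>\max(\textnormal{ht}(P),\textnormal{ht}(Q))$ forces equality in the Depth Lemma and gives $\depth_S S/(P\cap Q)=n-\textnormal{ht}(P+Q)+1$. Writing $P_i=(X_i)$ and setting $a=|X_1\setminus(X_2\cup X_3)|$ and analogously $b,c$, the hypothesis $P_i\not\subset P_j+P_k$ is equivalent to the corresponding count being strictly positive.

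In the first case ($a,b,c\ge 1$), I would apply
$$0\to S/I\to S/P_1\oplus S/(P_2\cap P_3)\to S/(P_1+P_2\cap P_3)\to 0.$$
Setting $Y=X_2\setminus(X_1\cup X_3)$ and $Z=X_3\setminus(X_1\cup X_2)$, the last term is isomorphic to $K[Y\sqcup Z]/(Y\cdot Z)$; a small Mayer--Vietoris computation shows this quotient has depth exactly $1$ since $|Y|,|Z|\ge 1$. The middle term has depth $\min(b+c+bc,\,a+1)\ge 2$. The Depth Lemma applied forward gives $\depth(S/I)\ge 2$, and the complementary inequality $\depth C\ge\min(\depth A-1,\depth B)$ rules out $\depth(S/I)>2$, so $\depth(I)=3$.

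In the second case (WLOG $P_1\subset P_2+P_3$), the crucial identity is
$$(P_1\cap P_2)+(P_1\cap P_3)=P_1\cap(P_2+P_3)=P_1,$$
where the first equality is a general monomial-ideal fact (check on monomials) and the second uses the case hypothesis. This yields
$$0\to S/I\to S/(P_1\cap P_2)\oplus S/(P_1\cap P_3)\to S/P_1\to 0.$$
The preliminary gives $\depth S/(P_1\cap P_j)=n-\textnormal{ht}(P_1+P_j)+1$ for $j=2,3$, while $\depth S/P_1=n-\textnormal{ht}(P_1)$. Using the remaining non-containment hypotheses one checks that the middle depth is strictly less than $\depth S/P_1+1$, so the Depth Lemma gives
$$\depth(I)=n+2-\max\bigl(\textnormal{ht}(P_1+P_2),\textnormal{ht}(P_1+P_3)\bigr).$$

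The main technical obstacle is extracting equality from the Depth Lemma rather than a mere lower bound: in Case 1 through the complementary Depth Lemma inequality, and in Case 2 through careful use of \emph{all} the non-containment hypotheses to guarantee the strict inequality $\depth B<\depth C+1$. A secondary point is the identification $S/(P_1+P_2\cap P_3)\cong K[Y\sqcup Z]/(Y\cdot Z)$ in Case 1 and the fact that its depth jumps as soon as $|Y|$ or $|Z|$ vanishes---this jump is what forces the case distinction in the statement of the proposition.
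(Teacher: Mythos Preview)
Your approach is essentially the paper's: both cases use the same Mayer--Vietoris sequences, and your direct identification of $S/(P_1+P_2\cap P_3)$ with $K[Y\sqcup Z]/(Y\cdot Z)$ is just a repackaging of the paper's auxiliary sequence (it computes the same depth via $(P_1+P_2)\cap(P_1+P_3)$). Case~2 uses the identical exact sequence as the paper.

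There is, however, a real gap in your Case~2. You verify only that the middle depth is strictly less than $\depth(S/P_1)+1$, i.e.\ $\depth B\le\depth C$, and then assert that the Depth Lemma yields $\depth(S/I)=\depth B$. But the Depth Lemma gives this equality only when $\depth B<\depth C$ strictly; when $\depth B=\depth C$ it gives merely $\depth(S/I)\ge\depth B$. The equality case genuinely occurs: take $n=4$, $P_1=(x_1,x_2)$, $P_2=(x_1,x_3)$, $P_3=(x_2,x_4)$; here $\textnormal{ht}(P_1+P_2)=\textnormal{ht}(P_1+P_3)=3=\textnormal{ht}(P_1)+1$, so $\depth B=\depth C=2$. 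The paper closes this gap by invoking the upper bound $\depth(S/I)\le\dim(S/P_1)$ (valid since $P_1\in\operatorname{Ass}(S/I)$), which in this sub-case coincides with $\depth B$. You need to add this step.

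A minor point: writing ``$b+c+bc$'' for $\depth(S/P_1)$ is confusing; presumably ``$bc$'' is meant as shorthand for $|(X_2\cap X_3)\setminus X_1|$, but you should say so explicitly.
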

\begin{proof}
Consider the following  exact sequence of S-modules:

$$(1) \quad 0 \rightarrow  \frac{S}{P_{1} \cap P_{2}} \rightarrow
\frac{S}{P_{1}} \oplus \frac{S}{P_{2}} \rightarrow
\frac{S}{P_{1}+P_{2}} \rightarrow 0.$$

We have :
 \[\depth \
\left(\frac{S}{P_{1}+P_{2}}\right)=n-\textnormal{ht}(P_{1}+P_{2})\]
and
 \[\depth \
\left(\frac{S}{P_{1}} \oplus \frac{S}{P_{2}}\right)= \min \left \{
\depth (S/P_1),\depth(S/P_2) \right \}=n-\max
\left\{\textnormal{ht}(P_1),\textnormal{ht}(P_{2})\right\}.\] By
hypotheses
$\textnormal{ht}(P_{1}+P_{2})>\max\{\textnormal{ht}(P_{1}),\textnormal{ht}(P_{2})\}$
thus
$n-\textnormal{ht}(P_{1}+P_{2})<n-\max\{\textnormal{ht}(P_{1}),\textnormal{ht}(P_{2})\}.$
That means $\depth(\frac{S}{P_{1}+P_{2}})<\depth(\frac{S}{P_{1}}
\oplus \frac{S}{P_{2}})$ and applying Depth Lemma to (1) we obtain
$\depth(\frac{S}{P_{1} \cap
P_{2}})=n-\textnormal{ht}(P_{1}+P_{2})+1.$ There are two cases:

{\bf Case 1.} $P_{i} \not \subset P_{j}+P_{k}$ for any
    different $i,j,k \in \{1,2,3\}$.

     Let us consider the following two exact sequences of
    S-modules: $$(2) \quad 0 \rightarrow \frac{S}{I}
    \rightarrow \frac{S}{P_{1} \cap P_{2}} \oplus
\frac{S}{P_{3}} \rightarrow \frac{S}{P_{3}+(P_{1} \cap P_{2})}
\rightarrow 0,$$
$$(3) \quad 0 \rightarrow
    \frac{S}{(P_{1}+P_{3})\cap(P_{2}+P_{3})} \rightarrow
\frac{S}{(P_{1}+P_{3})} \oplus \frac{S}{(P_{2}+P_{3})} \rightarrow
\frac{S}{P_{1}+P_{2}+ P_{3}} \rightarrow 0.$$ By the hypothesis of
this case, we have $\depth(\frac{S}{P_{1}+P_{3}})>0$ and
$\depth(\frac{S}{P_{2}+P_{3}})>0$. Applying Depth Lemma to (3) we
get
$\depth\left(\frac{S}{(P_{1}+P_{3})\cap(P_{2}+P_{3})}\right)=1$
because $P_1+P_{2}+P_{3}$ is the maximal ideal. But
$(P_{1}+P_{3})\cap(P_{2}+P_{3})=P_{3}+(P_{1} \cap P_{2})$, so we
get $\depth\left(\frac{S}{P_{3}+(P_{1} \cap P_{2})}\right)=1.$
Using again the hypothesis of this case in (2) we can say that
$\depth(S/P_3)>1$ and $\depth(\frac{S}{P_{1} \cap P_{2}})>1$. By
Depth Lemma applied to (2) we have $\depth(S/I)=2$. Thus $\depth\
(I)=3.$

{\bf Case 2.} There exist different $i,j,k \in \{1,2,3\}$ such
    that $P_{i} \subset P_{j}+P_{k}$.

    After a possible
    renumbering of $(P_i)_{1 \le i \le 3}$ we may suppose that $P_{1} \subset
    P_{2}+P_{3}$. Note that $P_{1}=P_{1} \cap
    P_{2} + P_{1} \cap P_{3}$. Let us consider the next
    exact sequence of $S$-modules:
$$(4) \quad 0 \rightarrow
    \frac{S}{I} \rightarrow \frac{S}{(P_{1} \cap P_{2})}
\oplus \frac{S}{P_{1} \cap P_{3}} \rightarrow \frac{S}{P_{1}}
\rightarrow 0.$$ Remark that $\depth(\frac{S}{P_{1} \cap P_{2}})$
and $\depth(\frac{S}{P_{1} \cap P_{3}})$ are smaller or equal than
$\dim(S/P_1)$ (see \cite{BH}). We prove that
$$\depth(S/I)=\min\{\depth(\frac{S}{P_{1} \cap
P_{2}}),\depth(\frac{S}{P_{1} \cap P_{3}})\}.$$

If
$n-\textnormal{ht}(P_{1})=\dim(S/P_1)>\min\{\depth(\frac{S}{P_{1}
\cap P_{2}}),\depth(\frac{S}{P_{1} \cap
P_{3}})\}=n+1-\max\{\textnormal{ht}(P_{1}+P_{2}),\textnormal{ht}(P_{1}+P_{3})\}$
then we are done by Depth Lemma applied to (4).

Otherwise,
$n-\textnormal{ht}(P_{1})=n+1-\max\{\textnormal{ht}(P_{1}+P_{2}),\textnormal{ht}(P_{1}+P_{3})\}$
and applying again Depth Lemma we get that $\depth(S/I)\ge
\min\{\depth(\frac{S}{P_{1} \cap P_{2}}),\depth(\frac{S}{P_{1}
\cap P_{3}})\}$, the inequality being equality because
$\depth(S/I)\le
\dim(S/P_1)=n-\textnormal{ht}(P_1)=\min\{\depth(\frac{S}{P_{1}
\cap P_{2}}),\depth(\frac{S}{P_{1} \cap P_{3}})\}$. Thus we get
$$\depth(S/I)=n+1-\max\{\textnormal{ht}(P_{1}+P_{2}),\textnormal{ht}(P_{1}+P_{3})\}$$
and so $$\depth\
(I)=n+2-\max\{\textnormal{ht}(P_{1}+P_{2}),\textnormal{ht}(P_{1}+P_{3})\}.$$
\end{proof}

The next lemma presents a decomposition of the above $I$ as a
direct sum of its linear subspaces. These subspaces are
``simpler'' monomial ideals, for which we already know ``good''
Stanley decompositions. Substituting them in the above direct sum
we get some {\em special Stanley decompositions} where it is
easier to lower bound their Stanley depth.

We may suppose after a possible renumbering of variables that
$P_1=(x_1,\ldots,x_r)$. Let us denote the following:

$$b_2 - \textnormal{the number of variables from } \{x_i|1 \le i \le r\} \textnormal{ for
which } x_i \in P_2,$$
$$b_3 - \textnormal{the number of variables from } \{x_i|1 \le i \le r\} \textnormal{ for
which } x_i \in P_3,$$
$$b_1 - \textnormal{the number of variables from } \{x_i|1 \le i \le r\} \textnormal{ for
which } x_i \in P_2 \cup P_3,$$
$$a_{23} - \textnormal{the number of variables from } \{x_i|1 \le i \le r\} \textnormal{ for
which } x_i \in P_2 \setminus P_3,$$
$$a_{32} - \textnormal{the number of variables from } \{x_i|1 \le i \le r\} \textnormal{ for
which } x_i \in P_3 \setminus P_2,$$
$$c - \textnormal{the number of variables from } \{x_i|r+1 \le i \le n\} \textnormal{ for
which } x_i \in P_2 \cap P_3,$$
$$A = \left\lceil \frac{a_{32}}{2}\right\rceil+\left\lceil
\frac{\textnormal{ht}(P_2)-b_2}{2}
\right\rceil+n-a_{32}-\textnormal{ht}(P_2),$$$$B = \left\lceil
\frac{a_{23}}{2}\right\rceil+\left\lceil
\frac{\textnormal{ht}(P_3)-b_3}{2}
\right\rceil+n-a_{23}-\textnormal{ht}(P_3),$$

$$C = \left\lceil \frac{r-b_1}{2}\right\rceil+\left\lceil
\frac{\textnormal{ht}(P_2)-b_2-c}{2} \right\rceil+\left\lceil
\frac{\textnormal{ht}(P_3)-b_3-c}{2} \right\rceil,$$
$$S' = K[\{x_i|1 \le i \le r, x_i \not\in
P_3\}],\ S'' = K[\{x_i|1 \le i \le r, x_i \not\in
P_2\}],$$ $$\widetilde{S} = K[\{x_i|1 \le i \le r, x_i
\not\in P_2+P_3\},x_{r+1},\ldots,x_n].$$

\begin{Lemma}\label{sd} Let $S$=K[$x_{1}$,\ldots,$x_{n}$] and $P_{1}$,$P_{2}$,$P_{3}$ be
three non-zero monomial prime ideals not included one in the other
such that $\displaystyle\sum\limits_{i=1}^3
P_{i}=(x_{1},\ldots,x_{n})$. Let I=$P_{1} \cap P_{2} \cap P_{3}$.
The next sum is a direct sum of linear subspaces of $I$:
$$I = I_1 \oplus I_2 \oplus I_3 \oplus I_4,$$ where:
$$I_1=(I\cap K[x_1,\ldots ,x_r])S,\ \
I_2=(P_2 \cap S')S'[x_{r+1},\ldots,x_n] \cap (P_3 \cap
S'[x_{r+1},\ldots,x_n]),$$ $$I_3=(P_3 \cap
S'')S''[x_{r+1},\ldots,x_n] \cap (P_2 \cap
S''[x_{r+1},\ldots,x_n]),\ \ I_4 = I \cap \widetilde{S}.$$
\end{Lemma}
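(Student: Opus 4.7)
The plan is to exhibit a partition of the monomial basis of $I$ into four classes whose $K$-spans are exactly $I_1,I_2,I_3,I_4$; since $I$ and each $I_j$ are $K$-subspaces spanned by monomials of $S$, the direct-sum decomposition then follows at once.

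For a monomial $u\in I$, I would introduce indicators $\alpha(u),\beta(u)\in\{0,1\}$: set $\alpha(u)=1$ iff some variable $x_i$ with $1\le i\le r$ and $x_i\in P_2$ divides $u$, and set $\beta(u)=1$ iff some $x_i$ with $i\le r$ and $x_i\in P_3$ divides $u$. Because $P_1=(x_1,\ldots,x_r)$, every $u\in I$ automatically has a $P_1$-divisor in $\{x_1,\ldots,x_r\}$; the pair $(\alpha(u),\beta(u))$ simply records whether the $P_2$- and $P_3$-divisors of $u$ also lie in this range. The claim then is that the four possible values of $(\alpha,\beta)$ match the four subspaces via $(1,1)\leftrightarrow I_1$, $(1,0)\leftrightarrow I_2$, $(0,1)\leftrightarrow I_3$, $(0,0)\leftrightarrow I_4$.

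The identifications of $I_1$ and $I_4$ are routine. Writing a monomial $m\in I$ uniquely as $m_1m_2$ with $m_1\in K[x_1,\ldots,x_r]$ and $m_2\in K[x_{r+1},\ldots,x_n]$, one checks that $m\in(I\cap K[x_1,\ldots,x_r])S$ iff $m_1\in I\cap K[x_1,\ldots,x_r]$, and, using that $m\in P_1$ makes $m_1\in P_1$ automatic, this is equivalent to $(\alpha(m),\beta(m))=(1,1)$. Likewise $m\in I\cap\widetilde S$ iff $m$ uses no variable from $(P_2\cup P_3)\cap\{x_1,\ldots,x_r\}$, i.e.\ iff $(\alpha(m),\beta(m))=(0,0)$. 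The main technical step is $I_2$ (and symmetrically $I_3$): the factor $(P_2\cap S')S'[x_{r+1},\ldots,x_n]$ is the ideal generated by $\{x_i:i\le r,\ x_i\in P_2\setminus P_3\}$, while $P_3\cap S'[x_{r+1},\ldots,x_n]$ is generated by $\{x_j:j>r,\ x_j\in P_3\}$. A monomial $m$ therefore belongs to $I_2$ iff $m\in S'[x_{r+1},\ldots,x_n]$ (equivalent to $\beta(m)=0$), has a divisor in $\{x_i:i\le r,\ x_i\in P_2\setminus P_3\}$ (given $\beta(m)=0$, equivalent to $\alpha(m)=1$), and has a $P_3$-divisor from $\{x_{r+1},\ldots,x_n\}$; conversely, if $m\in I$ with $(\alpha(m),\beta(m))=(1,0)$, then $m\in P_3$ combined with $\beta(m)=0$ forces the $P_3$-divisor of $m$ to lie in $\{x_{r+1},\ldots,x_n\}$, so that all three conditions hold and $m\in I_2$.

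Once these four equivalences are in place, every monomial of $I$ has a unique indicator pair and hence lies in exactly one of $I_1,I_2,I_3,I_4$, from which the equality $I=I_1\oplus I_2\oplus I_3\oplus I_4$ as $K$-subspaces is immediate. I expect the only substantive bookkeeping to be the $I_2$/$I_3$ case, where membership $m\in P_3$ (resp.\ $m\in P_2$) must be invoked to push the missing divisor into $\{x_{r+1},\ldots,x_n\}$; no deeper argument should be needed.
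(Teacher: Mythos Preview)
Your argument is correct and is essentially the same as the paper's: the paper's case split on whether a monomial $a\in I$ lies in $(P_2\cap K[x_1,\dots,x_r])S$ and/or $(P_3\cap K[x_1,\dots,x_r])S$ is exactly your indicator pair $(\alpha,\beta)$, and the key step---using $a\in P_2$ (resp.\ $a\in P_3$) together with $\alpha=0$ (resp.\ $\beta=0$) to force the missing divisor into $\{x_{r+1},\dots,x_n\}$---is the same in both. If anything, your formulation is slightly more careful about directness, since you verify both directions of each equivalence rather than only the inclusion the paper checks explicitly.
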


\begin{proof}Note that $I \supseteq I_1 + I_2 + I_3 + I_4$ is obvious because
every $I_i \subseteq I.$ Conversely, let $a$ be a monomial from
$I$. If $a \not\in I_1$, then we have the next {\em three disjoint
cases}:

{\bf Case 1.} $a \not\in (P_2 \cap K[x_1,\ldots,x_r])S$ but $a \in
(P_3 \cap K[x_1,\ldots,x_r])S.$

Let $a=uv$, where $u \in K[x_1,\ldots,x_r]$ and $v \in
K[x_{r+1},\ldots,x_n]$ monomials. From the hypothesis of this case
we get that $u \not\in (P_2 \cap K[x_1,\ldots,x_r])$. But $P_2$ is
a prime ideal, so it follows that $v \in P_2,$ which leads us to
$a \in I_3.$

{\bf Case 2.} $a \in (P_2 \cap K[x_1,\ldots,x_r])S$ but $a \not\in
(P_3 \cap K[x_1,\ldots,x_r])S.$

This case is similar with {\em Case 1.}

{\bf Case 3.} $a \not\in (P_2 \cap K[x_1,\ldots,x_r])S$ and $a
\not\in (P_3 \cap K[x_1,\ldots,x_r])S.$

Let $a=uv$, where $u \in K[x_1,\ldots,x_r]$ and $v \in
K[x_{r+1},\ldots,x_n]$ monomials. From the hypothesis of this case
we get that $u \not\in P_2 \cap K[x_1,\ldots,x_r]$ and $u \not\in
P_3 \cap K[x_1,\ldots,x_r]$. Thus $v \in P_2 \cap P_3 \cap
K[x_{r+1},\ldots,x_n]$ because $P_2$ and $ P_3$ are prime ideals.
Hence $a \in I_4$ since $u \in P_1.$

Because the cases are disjoint  we get that
the sum $I=I_1+I_2+I_3+I_4$ is direct.
\end{proof}

\begin{Proposition}\label{sdepth}
Let $P_1,P_2,P_3$ be three non-zero prime monomial ideals of S
such that there exists no inclusion between any two of them,
$\displaystyle \sum_{i=1}^{3} P_i=(x_1,\ldots,x_n)$ and set $I =
P_1 \cap P_2 \cap P_3$. With the above notations set $D=\sdepth ((I\cap K[x_1,\ldots,x_r])S)$ if $I\cap K[x_1,\ldots,x_r]\not=0$. Then
$$\sdepth\ (I) \ge  \left\{
\begin{array}{l l}
\min\{A,B,C,D\} &, \textnormal{ if } P_{i} \not\subset P_{j}+P_{k} \textnormal{ for any different }  i,j,k \in \{1,2,3\}\\
\min\{A,B,D\} &, \textnormal{ if } P_{1} \subset P_{2}+P_{3}.\\
\end{array}\right.$$
\end{Proposition}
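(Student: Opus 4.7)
The plan is to exploit the direct-sum decomposition $I = I_1 \oplus I_2 \oplus I_3 \oplus I_4$ from Lemma \ref{sd}: assembling Stanley decompositions of the four summands yields a Stanley decomposition of $I$ whose Stanley depth is the minimum of those of the pieces. The bound $\sdepth(I_1) \geq D$ is built into the definition of $D$, so it remains to show $\sdepth(I_2) \geq B$, $\sdepth(I_3) \geq A$, and, when $I_4 \neq 0$, $\sdepth(I_4) \geq C$.

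For $I_2$, I would work inside $T_2 := S'[x_{r+1},\ldots,x_n]$ and identify $I_2 = (P_2 \cap S')T_2 \cap (P_3 \cap T_2)T_2$ as the intersection of two prime ideals generated by \emph{disjoint} sets of variables: the $a_{23}$ variables in $\{x_1,\ldots,x_r\} \cap P_2 \setminus P_3$ for $P_2 \cap S'$, and the $\textnormal{ht}(P_3) - b_3$ variables in $\{x_{r+1},\ldots,x_n\} \cap P_3$ for $P_3 \cap T_2$; the remaining $n - a_{23} - \textnormal{ht}(P_3)$ variables of $T_2$ appear in neither. Passing to the sub-polynomial ring spanned by only the variables of the two primes, Lemma \ref{sdep1} combined with the formula $\sdepth(\mm) = \lceil n/2 \rceil$ from \cite{Sh} gives a lower bound of $\lceil a_{23}/2 \rceil + \lceil (\textnormal{ht}(P_3) - b_3)/2 \rceil$; Lemma \ref{hvz} then re-introduces the $n - a_{23} - \textnormal{ht}(P_3)$ free variables to produce exactly $B$. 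The symmetric argument, with $P_2$ and $P_3$ interchanged, gives $\sdepth(I_3) \geq A$.

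For $I_4$ I would split the variables of $\widetilde{S}$ into $U$ (the $r - b_1$ variables of $\{x_1,\ldots,x_r\}$ outside $P_2 + P_3$) and $V = \{x_{r+1},\ldots,x_n\}$. Since $P_1 + P_2 + P_3 = \mm$ and $P_1 = (x_1,\ldots,x_r)$, every variable of $V$ already lies in $P_2 \cup P_3$, so $V$ partitions into $\textnormal{ht}(P_2) - b_2 - c$ variables only in $P_2$, $\textnormal{ht}(P_3) - b_3 - c$ only in $P_3$, and $c$ in both, with no leftover. Writing $I_4 = (P_1 \cap \widetilde{S})\widetilde{S} \cap ((P_2 \cap V) \cap (P_3 \cap V))\widetilde{S}$, Lemma \ref{sdep1} splits off the $U$-factor with Stanley depth $\lceil (r - b_1)/2 \rceil$ (again by \cite{Sh}), while Lemma \ref{pq} bounds the remaining two-prime intersection in $V$ by $\lceil (\textnormal{ht}(P_2) - b_2 - c)/2 \rceil + \lceil (\textnormal{ht}(P_3) - b_3 - c)/2 \rceil$; these sum to $C$.

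Combining the four bounds yields $\sdepth(I) \geq \min\{A,B,C,D\}$. In Case 2, where $P_1 \subset P_2 + P_3$, every variable of $P_1$ is already in $P_2 \cup P_3$, so $b_1 = r$ and $\widetilde{S}$ contains no variable of $P_1$; hence $I_4 = 0$ drops out of the decomposition and the bound simplifies to $\min\{A, B, D\}$. The main obstacle will be setting up the disjoint-variable partitions inside $T_2$, $T_3$, and $\widetilde{S}$ cleanly, and carefully bookkeeping the ``free'' variables so that the additive corrections $n - a_{23} - \textnormal{ht}(P_3)$ and $n - a_{32} - \textnormal{ht}(P_2)$ in $B$ and $A$ come out of Lemma \ref{hvz} with the right sign.
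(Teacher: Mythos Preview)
Your proposal is correct and matches the paper's approach: the same decomposition $I = I_1 \oplus I_2 \oplus I_3 \oplus I_4$ from Lemma~\ref{sd}, the same use of Lemma~\ref{sdep1} together with Shen's formula for $I_2$ and $I_3$, and the same application of Lemma~\ref{pq} for $I_4$. The only differences are cosmetic (you strip the free variables first and reinstate them via Lemma~\ref{hvz}, whereas the paper keeps them inside each factor of Lemma~\ref{sdep1}) and one small omitted check---the paper verifies in Case~1 that neither of $P_2\cap K[x_{r+1},\ldots,x_n]$ and $P_3\cap K[x_{r+1},\ldots,x_n]$ is contained in the other, so that the hypotheses of Lemma~\ref{pq} are met.
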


The proof follows from Lemma \ref{sd}, but first we see the idea
in the following example:

\begin{Example}{\em
Let $S=K[x_1,x_2,x_3,x_4]$ , $P_1=(x_1,x_2)$, $P_2=(x_2,x_3,x_4)$
and $P_3=(x_1,x_3)$. Then $I=P_1 \cap P_2 \cap P_3 =
(x_1x_2,x_1x_3,x_1x_4,x_2x_3)$ and the following Stanley
decomposition of $I$ is given by Lemma \ref{sd}:
$$I=(x_1x_2)\ K[x_1,x_2,x_3,x_4]\oplus (x_2x_3)\ K[x_2,x_3,x_4] \oplus (x_1x_3,x_1x_4)\
K[x_1,x_3,x_4] .$$ Note that the
last term of Lemma \ref{sd} does not appear in this example since
$P_1 \subset P_2+P_3$. We see that the first and the third term in
the sum are principal ideals. Therefore
$\sdepth((x_1x_2)K[x_1,x_2,x_3,x_4])=4$ and
$\sdepth((x_2x_3)K[x_2,x_3,x_4])=3.$ As for the second term we use
Lemma \ref{pq}, so
$$\sdepth((x_1)\cap(x_3,x_4)\ K[x_1,x_3,x_4])=\left\lceil
\frac{1}{2}\right\rceil + \left\lceil \frac{2}{2} \right\rceil =
2.$$ Thus $\sdepth\ (I) \ge \min\{4,2,3\}=2.$ The same thing
follows from the Proposition \ref{sdepth} because in this case
$b_2=1, b_3=1, b_1=2, a_{23}=1, a_{32}=1, c=1, A=2$ and $B=3$.
Therefore $\sdepth(I) \ge \min\{A,B\}=2$. Note that $\depth\ (I) =
2$ by Proposition \ref{depth} so Stanley's Conjecture holds for
this example. }\end{Example}

{\em Proof of Proposition \ref{sdepth}}

From Lemma \ref{sd} we have the direct sum of  spaces
$I = I_1 \oplus I_2 \oplus I_3 \oplus I_4$ where:
$$I_1=(I\cap K[x_1,\ldots ,x_r])S,\ \
I_2=(P_2 \cap S')S'[x_{r+1},\ldots,x_n] \cap (P_3 \cap
S'[x_{r+1},\ldots,x_n]),$$ $$I_3=(P_3 \cap
S'')S''[x_{r+1},\ldots,x_n] \cap (P_2 \cap
S''[x_{r+1},\ldots,x_n]),\ \ I_4 = I \cap \widetilde{S}.$$ Then
$\sdepth\ (I) \ge \min_{1\le i \le 4}\sdepth (I_i)$.
 By Lemma \ref{sdep1} we have
$$\sdepth_{S'[x_{r+1},\ldots,x_n]}\ (I_2) \ge \sdepth_{S'}(P_2 \cap
S')+\sdepth_{K[x_{r+1},\ldots,x_n]}(P_3 \cap
K[x_{r+1},\ldots,x_n])=$$
$$=\left(\left\lceil\frac{a_{23}}{2}\right\rceil+r-b_3-a_{23}\right)+\left(\left\lceil\frac{\textnormal{ht}(P_3)-b_3}{2}\right\rceil+(n-r)-
(\textnormal{ht}(P_3)-b_3)\right)=$$
$$=\left\lceil\frac{a_{23}}{2}\right\rceil+\left\lceil\frac{\textnormal{ht}(P_3)-b_3}{2}\right\rceil+n-a_{23}-\textnormal{ht}(P_3)=B,$$
applying also  \cite{Sh}(see also
\cite{C}). Similarly we get

$$\sdepth_{S''[x_{r+1},\ldots,x_n]}\ (I_3) \ge
\left\lceil\frac{a_{32}}{2}\right\rceil+\left\lceil\frac{\textnormal{ht}(P_2)-b_2}{2}\right\rceil+n-a_{32}-\textnormal{ht}(P_2)=
A.$$ Finally, $$\sdepth_{\widetilde{S}}\ (I_4) \ge
\left\lceil\frac{r-b_1}{2}\right\rceil +
\sdepth_{K[x_{r+1},\ldots,x_n]}(P_2 \cap P_3 \cap
K[x_{r+1},\ldots,x_n]).$$ There are two cases:

{\bf Case 1.} If $P_{i} \not\subset P_{j}+P_{k} \textnormal{ for
any different }  i,j,k \in \{1,2,3\}$

Note that $P_2 \cap K[x_{r+1},\ldots,x_n] \not\subset P_3 \cap
K[x_{r+1},\ldots,x_n]$ (otherwise it would result that $P_2
\subset P_1+P_3$, {\em contradicting} the hypothesis of this
case). In the same idea $P_3 \cap K[x_{r+1},\ldots,x_n]
\not\subset P_2 \cap K[x_{r+1},\ldots,x_n].$ By applying Lemma
\ref{pq} for $r=\textnormal{ht}(P_2)-b_2-c$ and
$n-t=\textnormal{ht}(P_3)-b_3-c$ we get
$$\sdepth_{K[x_{r+1},\ldots,x_n]}(P_2 \cap P_3 \cap
K[x_{r+1},\ldots,x_n]) \ge \left\lceil\frac{\textnormal{ht}(P_2)-b_2-c}{2}\right\rceil+\left\lceil\frac{\textnormal{ht}(P_3)-b_3-c}{2}\right\rceil.$$
Note that there are no free variables above. Therefore
$$\sdepth_{\widetilde{S}}\ (I_4) \ge
\left\lceil\frac{r-b_1}{2}\right\rceil+\left\lceil\frac{\textnormal{ht}(P_2)-b_2-c}{2}\right\rceil+\left\lceil\frac{\textnormal{ht}(P_3)-b_3-c}{2}\right\rceil
= C.$$ Consequently, it follows
$$\sdepth \ (I) \ge \min\{A,B,C,D\}.$$

{\bf Case 2.} If $P_1 \subset P_2+P_3$

Note that in this case $\widetilde{S}=K[x_{r+1},\ldots,x_n]$ and
$P_1 \cap \widetilde{S} =0.$ Thus $I_4$ does not appear in the
Stanley decomposition of $I$ given by Lemma \ref{sd}. Hence
$\sdepth \ (I) \ge \min\{A,B,D\}.$

If one $I_i=0$, we consider in both cases that its corresponding
integer from $\{A,B,C,D\}$ will not appear in the $\sdepth$ formula.

$\Box$

\begin{Remark}{\em In the notations and hypotheses of Proposition
\ref{sdepth}, let $\widehat{S}=S[x_{n+1},\ldots,x_t]$ for some
$t>n$. Then
$$\sdepth_{\widehat{S}}\ (I\widehat{S}) \ge \left\{
\begin{array}{l l}
\min\{A,B,C,D\}+(t-n) &, \textnormal{if } P_{i} \not\subset P_{j}+P_{k} \textnormal{ for any different }i,j,k  \\
\min\{A,B,D\}+(t-n) &, \textnormal{ if } P_{1} \subset P_{2}+P_{3},\\
\end{array}\right.$$
by the Lemma \ref{hvz} and the Proposition \ref{sdepth}}.
\end{Remark}

\begin{Theorem}\label{sd>d} Let $P_1,P_2$ and $P_3$ be three
non-zero prime monomial ideals of $S$ not included one in the
other and set $I=P_1 \cap P_2 \cap P_3$. Then,
$$\sdepth\ (I) \ge \depth\ (I).$$
\end{Theorem}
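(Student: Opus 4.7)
The plan is to combine Propositions \ref{depth} and \ref{sdepth} with a reduction to the case $P_1 + P_2 + P_3 = (x_1, \ldots, x_n)$, and then verify in each of the two cases of Proposition \ref{depth} that the lower bound on $\sdepth(I)$ supplied by Proposition \ref{sdepth} meets or exceeds $\depth(I)$.

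First I reduce to $\sum_i P_i = \mm$. If this sum is a proper ideal of $S$, let $T \subset S$ be the polynomial subring on the variables actually appearing in $P_1 + P_2 + P_3$, so $S = T[y_1, \ldots, y_k]$ for some $k > 0$. The $P_i$ live in $T$ and $I = (P_1 \cap P_2 \cap P_3)S$. The sequence $y_1, \ldots, y_k$ is regular on $P_1 \cap P_2 \cap P_3$, giving $\depth_S(I) = \depth_T(P_1 \cap P_2 \cap P_3) + k$, while Lemma \ref{hvz} yields the same identity for $\sdepth$. The desired inequality is therefore preserved by adjoining free variables, and it suffices to assume $\sum_i P_i = \mm$, which is exactly the hypothesis of Propositions \ref{depth} and \ref{sdepth}.

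Under this hypothesis, after renumbering with $P_1 = (x_1, \ldots, x_r)$, let $V_i$ denote the variable index set of $P_i$ and introduce $e = |V_1 \cap V_2 \cap V_3|$, $d_2 = |V_2 \setminus (V_1 \cup V_3)|$, $d_3 = |V_3 \setminus (V_1 \cup V_2)|$. Then $\textnormal{ht}(P_j) - b_j = c + d_j$ for $j = 2,3$; $b_1 = e + a_{23} + a_{32}$; and $n - r = c + d_2 + d_3$. These identities rewrite the quantities of Proposition \ref{sdepth} as
\begin{align*}
A &= \lceil a_{32}/2 \rceil + \lceil (c+d_2)/2 \rceil + (r - b_1) + d_3,\\
B &= \lceil a_{23}/2 \rceil + \lceil (c+d_3)/2 \rceil + (r - b_1) + d_2,\\
C &= \lceil (r-b_1)/2 \rceil + \lceil d_2/2 \rceil + \lceil d_3/2 \rceil,
\end{align*}
and combining \cite[Theorem 5.6]{PQ} (Stanley's conjecture for intersections of two primes) with Lemma \ref{hvz} gives $D \geq n - b_1 + 1$.

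In \textbf{Case 1} of Proposition \ref{depth} the three non-inclusions force $r - b_1, d_2, d_3 \geq 1$, while the target is $\depth(I) = 3$; each of $A, B, C, D$ is then $\geq 3$ by direct inspection of the formulas above. In \textbf{Case 2} (after renumbering so $P_1 \subset P_2 + P_3$) one has $r = b_1$, $a_{23}, a_{32} \geq 1$, $c + d_2 \geq 1$, $c + d_3 \geq 1$, and Proposition \ref{depth} simplifies to $\depth(I) = 2 + \min(d_2, d_3)$; then $A \geq 2 + d_3$, $B \geq 2 + d_2$, and $D \geq 2 + c + d_2 + d_3$, each of which is $\geq 2 + \min(d_2, d_3)$. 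Adopting the convention of Proposition \ref{sdepth} that a vanishing $I_i$ simply drops its term from the min, the inequality $\sdepth(I) \geq \depth(I)$ follows in both cases. The main ``obstacle'' is purely combinatorial bookkeeping: translating the paper's $a_{\bullet}, b_{\bullet}, c$ into the partition counts $a_{23}, a_{32}, c, d_2, d_3, e, r - b_1$ makes each term of the min visibly $\geq \depth(I)$, with the slack being essentially zero in Case 2.
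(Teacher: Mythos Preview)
Your proof is correct and follows the same overall route as the paper: reduce to $\sum P_i=\mm$ via Lemma~\ref{hvz}, then feed Propositions~\ref{depth} and~\ref{sdepth} into the two-case split and check that each of $A,B,C,D$ dominates $\depth(I)$. Your repackaging of the counts via $d_2,d_3,e$ makes the inequalities a bit more transparent than the paper's version, but the argument is the same. One small remark: for $D$ you invoke \cite[Theorem~5.6]{PQ}, which works, but the paper gets away with less---it simply notes that $\sdepth_S(I_1)>n-r$ (there are $n-r$ free variables) and that $\depth(I)\le \dim(S/P_1)+1=n-r+1$, so $D\ge\depth(I)$ without appealing to the two-prime case.
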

\begin{proof}By \cite{HVZ} it is enough to suppose the case when
$\displaystyle\sum_{i=1}^{3} P_i =(x_1, \ldots, x_n)$. Note that
$D=\sdepth_S\ (I_1)$ is strictly greater  than the number of free
variables which is $n-r = \dim(S/P_1)$. Thus $D \ge \depth\ (I)$  (so $D$ can be  {\em omitted} below). As in
Proposition \ref{depth} and Proposition \ref{sdepth} there are two
cases:

{\bf Case 1.}$\textnormal{ If } P_{i} \not\subset P_{j}+P_{k}
\textnormal{ for any different }  i,j,k \in \{1,2,3\}$.

By Propositions \ref{depth} and \ref{sdepth} we have $\depth(I) =
3$ and we must prove that $A \ge
3$, $B \ge 3$ and $C \ge 3$.

By hypothesis of {\em Case 1} we have  $n \ge 3$ and $A \ge
\lceil\frac{1}{2}\rceil + \lceil\frac{1}{2}\rceil+1=3$. Similarly
we get $B \ge 3$. Also it follows $C \ge
\lceil\frac{1}{2}\rceil+\lceil\frac{1}{2}\rceil+\lceil\frac{1}{2}\rceil=3.$
Therefore,  $\sdepth \ (I) \ge \depth(I).$

{\bf Case 2.} There exist different $1 \le i \le j \le k \le 3 $
such that $P_i \subset P_j+P_k.$

After a possible renumbering of $(P_i)_{1 \le i \le 3}$ we may
suppose that $P_1 \subset P_2 + P_3$. In this case we show that $A,B\ge \depth(I) =
n+2-\max\{\textnormal{ht}(P_{1}+P_{2}),\textnormal{ht}(P_{1}+P_{3})\}$
 by Propositions \ref{depth} and
\ref{sdepth}. Since $P_1 \subset P_2+P_3$ we have
$$a_{32}+\textnormal{ht}(P_2)=\textnormal{ht}(P_1+P_2).$$
As the $P_i$'s are not included one in the other we get
$$\left\lceil\frac{a_{32}}{2}\right\rceil+\left\lceil\frac{\textnormal{ht}(P_2)-b_2}{2}\right\rceil+n-(a_{32}+\textnormal{ht}(P_2))
\ge 1+1+n-\textnormal{ht}(P_1+P_2),
$$ thus $A \ge \depth(I)$. Similarly it will result that $B \ge
\depth(I)$.

In conclusion, $$\sdepth\ (I) \ge \depth\ (I).$$
\end{proof}
Next we express the integers $A$, $B$, $C$ only in terms of heights of $(P_i)$, thus independently of the numbering of the variables.
\begin{Proposition}\label{sdsem}With the notations above, we get:

$$A=\left\lceil\frac{3n-\textnormal{ht}(P_1+P_2)-\textnormal{ht}(P_2+P_3)-\textnormal{ht}(P_2)}{2}\right\rceil+\left\lceil\frac{\textnormal{ht}(P_1+P_2)-\textnormal{ht}(P_1)}{2}\right\rceil,$$

$$B=\left\lceil\frac{3n-\textnormal{ht}(P_1+P_3)-\textnormal{ht}(P_2+P_3)-\textnormal{ht}(P_3)}{2}\right\rceil+\left\lceil\frac{\textnormal{ht}(P_1+P_3)-\textnormal{ht}(P_1)}{2}\right\rceil,$$
$$C=\left\lceil\frac{n-\textnormal{ht}(P_2+P_3)}{2}\right\rceil+\left\lceil\frac{n-\textnormal{ht}(P_1+P_3)}{2}\right\rceil+\left\lceil\frac{n-\textnormal{ht}(P_1+P_2)}{2}\right\rceil.$$
\end{Proposition}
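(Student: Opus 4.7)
The plan is that Proposition~\ref{sdsem} is a purely combinatorial identity: one must show that the expressions $A$, $B$, $C$ defined in Proposition~\ref{sdepth} coincide with those of Proposition~\ref{sdsem} after translating the local counting parameters $a_{23}$, $a_{32}$, $b_1$, $b_2$, $b_3$, $c$, $r$ into heights of the $P_i$ and their sums. Write $V_i$ for the set of variables generating $P_i$; then $\textnormal{ht}(P_i)=|V_i|$ and $\textnormal{ht}(P_i+P_j)=|V_i\cup V_j|$, while the definitions give $b_2=|V_1\cap V_2|$, $b_3=|V_1\cap V_3|$, $d:=|V_1\cap V_2\cap V_3|$, $c=|V_2\cap V_3|-d$, and $a_{32}=b_3-d$, $b_1=b_2+b_3-d$. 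From this one reads off $\textnormal{ht}(P_1+P_2)=r+\textnormal{ht}(P_2)-b_2$, $\textnormal{ht}(P_1+P_3)=r+\textnormal{ht}(P_3)-b_3$, $\textnormal{ht}(P_2+P_3)=\textnormal{ht}(P_2)+\textnormal{ht}(P_3)-d-c$, and, since $P_1+P_2+P_3$ is the maximal ideal, inclusion-exclusion for $|V_1\cup V_2\cup V_3|=n$ yields $r+\textnormal{ht}(P_2)+\textnormal{ht}(P_3)-b_2-b_3-c=n$.

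For $C$, one directly verifies by substitution into these identities that $n-\textnormal{ht}(P_2+P_3)=r-b_1$, $n-\textnormal{ht}(P_1+P_3)=\textnormal{ht}(P_2)-b_2-c$, and $n-\textnormal{ht}(P_1+P_2)=\textnormal{ht}(P_3)-b_3-c$, so the three summands of $C$ match term by term and the claimed formula drops out immediately.

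For $A$, the summand $\lceil(\textnormal{ht}(P_2)-b_2)/2\rceil$ rewrites at once as $\lceil(\textnormal{ht}(P_1+P_2)-\textnormal{ht}(P_1))/2\rceil$. The remaining piece $\lceil a_{32}/2\rceil+n-a_{32}-\textnormal{ht}(P_2)$ is folded into a single ceiling via the elementary identity $\lceil m/2\rceil+k=\lceil(m+2k)/2\rceil$ (with $k$ any integer), yielding $\lceil(2n-a_{32}-2\textnormal{ht}(P_2))/2\rceil$. Solving the relations above for $a_{32}$ gives $a_{32}=\textnormal{ht}(P_1+P_2)+\textnormal{ht}(P_2+P_3)-\textnormal{ht}(P_2)-n$, and substituting this produces $2n-a_{32}-2\textnormal{ht}(P_2)=3n-\textnormal{ht}(P_1+P_2)-\textnormal{ht}(P_2+P_3)-\textnormal{ht}(P_2)$, which is the numerator in the stated expression for $A$. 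The argument for $B$ is identical after interchanging the roles of $P_2$ and $P_3$ (and of $a_{32}$ with $a_{23}$).

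The whole statement is thus pure bookkeeping, and there is no conceptual obstacle. The only point that requires a moment of care is absorbing the integer tail $n-a_{32}-\textnormal{ht}(P_2)$ inside the ceiling; once this is done, both $A$ and $B$ reduce to a single ceiling in the heights, while $C$ is immediate from elementary inclusion-exclusion applied to the three sets $V_1,V_2,V_3$ of variables.
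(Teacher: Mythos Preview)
Your proof is correct and follows the same route as the paper: translate the counting parameters $r,b_1,b_2,b_3,a_{23},a_{32},c$ into heights via inclusion--exclusion on the variable sets $V_i$, then substitute into the definitions of $A,B,C$. The paper simply lists these translation formulas and says ``replace,'' whereas you spell out the substitution (including the step $\lceil m/2\rceil+k=\lceil(m+2k)/2\rceil$ to absorb the integer tail into a single ceiling), but the argument is the same.
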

\begin{proof}By definition we have $r=\textnormal{ht}(P_1)$,
$$b_2=\textnormal{ht}(P_1)+\textnormal{ht}(P_2)-\textnormal{ht}(P_1+P_2),
\ b_3=\textnormal{ht}(P_1)+\textnormal{ht}(P_3)-\textnormal{ht}(P_1+P_3),$$
$$b_1 = \textnormal{ht}(P_1)+\textnormal{ht}(P_2+P_3)-n,$$
$$a_{23}=\textnormal{ht}(P_1+P_3)+\textnormal{ht}(P_2+P_3)-\textnormal{ht}(P_3)-n,
a_{32}=\textnormal{ht}(P_1+P_2)+\textnormal{ht}(P_2+P_3)-\textnormal{ht}(P_2)-n,$$
$$c=\textnormal{ht}(P_1+P_2)+\textnormal{ht}(P_1+P_3)-\textnormal{ht}(P_1)-n,$$
and it is enough to replace them into the definition of $A,B$ and
$C$.
\end{proof}


\begin{thebibliography}{99}

\bibitem{A} J. Apel, {\em On a conjecture of R.P. Stanley; Part I – Monomial ideals,} J. Algebraic Combin. 17 (2003), 39-56.
\bibitem{BH} W. Bruns, J. Herzog, {\em Cohen Macaulay Rings, Revised edition, Cambridge,} Cambridge University Press, 1996.
\bibitem{C}  M. Cimpoeas, {\em Stanley depth of complete intersection monomial ideals,} Bull. Math. Soc. Sc. Math. Roumanie 51(99)(2008), 205-211.
\bibitem{HVZ} J. Herzog, M. Vladoiu, X. Zheng, {\em How to compute the Stanley depth of a monomial ideal,}  J.  Algebra, 322 (2009), 3151-3169.
\bibitem{Is} M. Ishaq, {\em Upper bounds for the Stanley depth}, arXiv:AC/1003.3471.
\bibitem{P} D. Popescu,  {\em An inequality between depth and Stanley depth}, Bull. Math. Soc. Sc. Math. Roumanie 52(100), (2009), 377-382.
\bibitem{PQ} D. Popescu, I. Qureshi, {\em Computing the Stanley depth}, J. Algebra, 323 (2010), 2943-2959.
\bibitem{R} A. Rauf, {\em Depth and Stanley depth of multigraded modules}, Comm.  Algebra, 38 (2010),773-784.
\bibitem{Sh} Y. Shen, {\em Stanley depth of complete intersection monomial ideals and upper-discrete partitions,} \ J.\  Algebra 321 (2009), 1285-1292.

\bibitem{S} R.P. Stanley, {\em Linear Diophantine equations and local cohomology,} Invent. Math. 68 (1982) 175-193.
\bibitem{Vi} R.\ H.\ Villarreal, {\em Monomial Algebras}, Marcel Dekker Inc.,\ New York,\ 2001.
\end{thebibliography}
\end{document}